\DeclareMathOperator{\dom}{dom}
\DeclareMathOperator{\Ran}{Ran}
\DeclareMathOperator{\Proj}{Proj}
\DeclareMathOperator{\Ker}{Ker}
\DeclareMathOperator{\Real}{Re}
\newtheorem{theorem}{Theorem}[section]
\newtheorem{lemma}[theorem]{Lemma}
\newtheorem{proposition}[theorem]{Proposition}
\theoremstyle{remark}
\newtheorem{example}[theorem]{Example}
\numberwithin{equation}{section}
\newcommand{\C}{\mathbb{C}}
\newcommand{\Edges}{\mathcal{E}}
\newcommand{\trD}{{\gamma_D}}
\newcommand\cH{{\mathcal{H}}}
\newcommand\cS{{\mathcal{S}}}
\newcommand\cF{{\mathcal{F}}}
\newcommand\cL{{\mathcal{L}}}
\begin{document}

\title{Exotic eigenvalues of shrinking metric graphs}
\date{\today}
\author{Gregory Berkolaiko}
\address{Department of Mathematics, Texas A\&M University, College
  Station, TX 77843-3368, USA}
\author{Yves Colin de Verdi\`ere}
\address{Institut Fourier, 
Universit\'e de Grenoble I,
38402 Saint-Martin d'H\`eres, France}

\maketitle

\begin{abstract}
  Eigenvalue spectrum of the Laplacian on a metric graph with
  arbitrary but fixed vertex conditions is investigated in the limit
  as the lengths of all edges decrease to zero at the same rate.  It
  is proved that there are exactly four possible types of eigenvalue
  asymptotics.  The number of eigenvalues of each type is expressed
  via the index and nullity of a form defined in terms of the vertex
  conditions.
\end{abstract}

%%%%%%%%%%%%%%%%%%%%%%%%%%%%%%%%%%%%%%%%%%%%%%%%%
\section{Introduction}
\label{sec:intro}

Analysis of differential operators on a metric graph or network is a
burgeoning area of research due to its numerous applications in
physics and engineering.  The first step in solving, say, a control
problem on a metric network \cite{CasZua_siamjco} is to understand the
spectral properties of the time-independent operator, which in many
cases is the Laplacian or Sturm--Liouville operator on every edge
augmented by suitable matching conditions on every vertex
\cite{BerKuc_graphs,Mugnolo_book}.

Many interesting mathematical problems arise here due to richness of
the topology of the network and of the physically realizable
\cite{CheExnTur_anp10} matching conditions.  To give an example, the
Laplacian on a metric graph can have eigenmodes localizing exactly on
a sub-structure, such as a cycle or a path connecting two vertices of
degree one \cite{SchKot_prl03,CdV_ahp15}.  Such eigenmodes play havoc
with inverse and/or control problems: a localized eigenmode is not
detectable or controllable via the edges where it is identically zero
\cite{Kur_jmp13}.  Fortunately, for the Laplacian with the so-called
standard (or Neumann--Kirchhoff) matching conditions, such exactly
localized eigenstate disappear under small perturbations of the edge
metric \cite{BerLiu_jmaa17} --- with some important exceptions.

Until recently, mathematical analyses of the Laplacian on a metric
graph usually assumed a uniform lower bound on the edge length.
However, spectral properties of graph models with short lengths are
important in applications, such as in creation of periodic materials
with special wave transmission properties
\cite{Kuc_jpa05,BarExnTat_rmp21,Che+_prep22,LawTanChr_sr22}.  Limits
of operators on metric graphs with short edges and general matching conditions
started\footnote{Spectral analysis of the \emph{discrete Laplacian} on a
  graph with shrinking edges has a more substantial history, including
works such as \cite{CdV_jctb90,CdV_prep18}.} receiving significant analytical attention with concurrent
publication of the works by Cacciapuoti \cite{Cac_s19} and by
Berkolaiko, Latushkin and Sukhtaiev \cite{BerLatSuk_am19}.  These
publications treated slightly different models, which were
subsequently generalized by Borisov \cite{Bor_am22}.  In these works,
convergence of operators (in the norm-resolvent sense) is established
under certain non-resonance conditions.  Counter-examples to
convergence use a clever choice of matching conditions to produce what
we call ``exotic states'': eigenfunction that localize on the
shrinking edges of the graph.  The limiting operator has those edges
shrunk to a point which cannot support eigenfunctions.  The
non-resonance conditions (such as \cite[Cond.~3.2]{BerLatSuk_am19}
reviewed in equation~\eqref{eq:NRC} below) are sufficient to exclude
exotic eigenfunctions, but to what extent such conditions are
necessary is an open problem.

In this note, we consider perhaps the simplest model of a graph with
short edge lengths --- the Laplacian on a graph where \emph{all}
lengths are shrinking uniformly.  The aim of this simplification is to
gain complete understanding of the number of the exotic states, and of
the eigenvalue asymptotics in general.  In particular, we find that,
in this context, the non-resonance condition
\cite[Cond.~3.2]{BerLatSuk_am19} is both necessary and sufficient in
that it provides the exact count of the exotic states.

To be more precise, we consider the Laplacian on a metric graph
$\Gamma_\epsilon$ with arbitrary but fixed self-adjoint matching
conditions and edge lengths that are uniformly shrinking: the length
of an edge $e$ is $\epsilon \ell_e$, where $\ell_e>0$ is fixed.
Intuitively, a shorter string produces a higher pitch, so the
eigenvalues should increase as $\epsilon \to 0$.  From dimensional
considerations of the eigenvalue equation
$-\frac{d^2}{dx^2}f = \lambda f$, one can expect the asymptotic
behavior $\lambda \sim \epsilon^{-2}$.  This intuition very quickly
runs into trouble: a Neumann Laplacian on an interval has a constant
eigenvalue $0$, Robin Laplacian may have eigenvalues scaling as
$\epsilon^{-1}$ (positive or negative), or even converging to a
non-zero constant, see examples is Appendix~\ref{sec:examples}.  In
this note we explain why these are the only options in our setting and
give a quantitative description of different behaviors.

\subsection*{Acknowledgements}

We are grateful to Jens Bolte, Pavel Exner, Yuri Latushkin, Uzy
Smilansky, Selim Sukhtaiev who at various times guided us with their
intuition on this problem.  Part of the work was done during GB's
visit to Institut Fourier in Grenoble, France.  Institut's hospitality
is gratefully acknowledged.  GB was partially supported by the
National Science Foundation grant DMS-1815075.

%%%%%%%%%%%%%%%%%%%%%%%%%%%%%%%%%%%%%%%%%%%%%%%%%%%%%%
\section{Notation and the main result}

Denoting by $\Edges$ the finite edge set of the graph in question, we
consider the operator $H_\epsilon$ acting as $-d^2/dx^2$ in the space
$\cH_\epsilon := \bigoplus_{e\in \Edges} L_2(0,\epsilon \ell_e)$,
where $\ell_e>0$ are assumed to be fixed.  The domain of the operator
$H_\epsilon$ is an appropriate subspace of $\cH_\epsilon$ which we now
describe.

Let $E \simeq \C^{2|\Edges|}$ be the space of boundary values of
functions on graph edges and $E' \simeq \C^{2|\Edges|}$ be the space
of derivative values.  More precisely, denoting by $e^-$ and $e^+$ the
two endpoints of the edge $e\in\Edges$, we set
\begin{equation}
  \label{eq:traces_def}
  f_{e^-} = f(0), \qquad f_{e^+} = f(\epsilon\ell_e),
  \qquad \partial f_{e^-} = f'(0),
  \qquad \partial f_{e^+} = -f'(\epsilon\ell_e),
\end{equation}
which are well-defined when $f \in H^2(0,\epsilon\ell_e)$.
Corresponding to a function $f$, we then have two vectors
\begin{equation}
  \label{eq:trace_vectors}
  F := \big(f_{e^\pm}\big)_{e\in\Edges} \in E
  \qquad\text{and}\qquad
  F := \big(\partial f_{e^\pm}\big)_{e\in\Edges} \in E'.  
\end{equation}
The edge endpoints are ordered arbitrarily but in the same way for $F$
and $F'$.  The mapping from $f \in \bigoplus_{e\in \Edges}
H^2(0,\epsilon\ell_e)$ to $F \in E$ will be denoted by $\trD$.

Self-adjoint realizations of $-d^2/dx^2$ may be described using the
theory of boundary triplets
\cite{Schmudgen_unboundedSAO,BehHasDeS_boundarytriples} which use
Green's formula to define a symplectic structure on $E\oplus E'$,
with $E'$ being viewed as a dual space to $E$ --- with a natural
identification between them.  With respect to this (complex!)
symplectic structure, the self-adjoint realizations of $-d^2/dx^2$ are
in one-to-one correspondence with the Lagrangian subspaces of $E\oplus
E'$ \cite{Shm_ian74,KosSch_jpa99,Har_jpa00}.  However, in our setting, more
directly usable parametrization of self-adjoint matching conditions
can be written as
\begin{align}
  \label{eq:sa_cond_D}
  &P_D F = 0,\\
  \label{eq:sa_cond_N}
  &P_N F' = 0,\\
  \label{eq:sa_cond_R}
  &P_R F' = \Lambda P_R F.
\end{align}
Here $P_D$, $P_N$ and $P_R$ are orthogonal projectors such as
$P_D+P_N+P_R=I$ (in particular, the projectors are mutually
orthogonal); $\Lambda$ is an invertible self-adjoint operator acting
on the space $P_R E$; equation~\eqref{eq:sa_cond_R} assumes the
natural identification of $E'$ and $E$.  Some examples of the matching
conditions can be found in Appendix~\ref{sec:examples}.  We remark that the
connectivity among edges (i.e. ``vertices'') is encoded in the
matching conditions \eqref{eq:sa_cond_D}-\eqref{eq:sa_cond_R} and will
not play a direct role in the subsequent analysis.  For this reason we
did not introduce the notion of a vertex and use the term ``matching
conditions'' instead of perhaps more widespread ``vertex
conditions''.

Let
\begin{align}
  \label{eq:D0def}
  D_0
  &:= \{F \in E \colon F_{e^-} = F_{e^+}\ \text{ for all }e\in \Edges\},
  \\
  \label{eq:N0def}
  N_0
  &:= \{F' \in E' \colon F_{e^-}' = -F_{e^+}'\ \text{ for all }e\in \Edges\}.
\end{align}
Our main results is the following.

\begin{theorem}
  \label{thm:main}
  Let
  \begin{equation}
    \label{eq:F0_alt_def}
    \cF_0 := D_0 \cap \Ker P_D,
    \qquad m_0 := \dim \cF_0.
  \end{equation}
  Denote by $n_-$, $n_0$ and $n_+$ the number of negative, zero and
  positive eigenvalues of the sesquilinear form
  $q[G,F] := \langle G, P_R \Lambda P_R F\rangle$ restricted to
  $\cF_0$.
  Then the spectrum $\{\lambda_n(\epsilon)\}_{n\in \mathbb{N}}$ of
  $H_\epsilon$ consists of
  \begin{enumerate}
  \item \label{item:fast}
    Infinitely many ``normal'' or ``fast'' eigenvalues which tend
    to $+\infty$ at the rate $1/\epsilon^2$,
    \begin{equation*}
      \lambda_n(\epsilon) = \mu_n \epsilon^{-2} + O(\epsilon^{-1}),
      \quad
      n > m_0,
      \qquad
      \epsilon \to 0,
    \end{equation*}
  \item \label{item:slow}
    and $m_0$ lowest ``slow'' eigenvalues, which
    are further boken into three types,
    \begin{enumerate}
    \item \label{item:slow_neg}
      $n_-$ ``slow negative'' eigenvalues tending to $-\infty$ at
      the rate $1/\epsilon$,
    \item \label{item:slow_fin}
      $n_0$ ``exotic'' eigenvalues tending to a finite
      \emph{non-positive} limit,
    \item \label{item:slow_pos}
      $n_+$ ``slow negative'' eigenvalues tending to $+\infty$
      at the rate $1/\epsilon$.
    \end{enumerate}
  \end{enumerate}
\end{theorem}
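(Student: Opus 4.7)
The natural first step is to rescale all edges to a common fixed length via $x = \epsilon y$. Setting $\tilde f(y) := f(\epsilon y)$, the operator $\tilde H_\epsilon := -d^2/dy^2$ on $\bigoplus_e L_2(0,\ell_e)$ has eigenvalues $\tilde\mu_n(\epsilon) = \epsilon^2 \lambda_n(\epsilon)$. Since boundary values of $\tilde f$ coincide with those of $f$ while derivative traces pick up a factor $\epsilon$, the matching conditions transform into $P_D \tilde F = 0$, $P_N \tilde F' = 0$, $P_R \tilde F' = \epsilon\, \Lambda\, P_R \tilde F$. The corresponding quadratic form is $\tilde h_\epsilon = \tilde h_0 + \epsilon\, q$ on the $\epsilon$-independent domain $\{\tilde f\in\bigoplus_e H^1(0,\ell_e): P_D \tilde F = 0\}$, with $\tilde h_0[\tilde f] = \sum_e \int_0^{\ell_e}|\tilde f'|^2\,dy$ and $q$ the form in the statement. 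The limit operator $\tilde H_0$ is non-negative; since $\tilde h_0$ vanishes precisely on edgewise constants, $\Ker \tilde H_0$ equals the space of constant vectors in $\Ker P_D$, namely $\cF_0$, of dimension $m_0$. Its positive eigenvalues $0 < \mu_1 \le \mu_2 \le \cdots$ are stable under the $O(\epsilon)$ boundary perturbation (which is $\tilde h_0$-bounded with arbitrarily small relative bound via the trace inequality), yielding $\tilde\mu_n(\epsilon) = \mu_n + O(\epsilon)$ for $n > m_0$, hence $\lambda_n(\epsilon) = \mu_n\epsilon^{-2} + O(\epsilon^{-1})$: this is Case (1).

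The $m_0$ eigenvalues bifurcating from the degenerate kernel are handled by first-order degenerate perturbation. The effective problem on $\cF_0$ is the generalized eigenvalue problem $q[G,F] = \nu\,\langle G,F\rangle_\ell$, where $\langle F,G\rangle_\ell := \sum_e \ell_e F_e \overline{G_e}$ is the $L_2$ inner product restricted to edgewise constants. By Sylvester's law of inertia the signature of this pencil is exactly $(n_-, n_0, n_+)$. Each nonzero eigenvalue $\nu$ produces $\tilde\mu(\epsilon) = \nu\epsilon + O(\epsilon^2)$, so $\lambda(\epsilon) \sim \nu/\epsilon \to \pm\infty$, accounting for Cases (2a) and (2c).

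The $n_0$ null directions of $q|_{\cF_0}$ survive to second order. A Feshbach/Schur-complement reduction onto $\cF_0$ yields, at order $\epsilon^2$, an effective quadratic form equal to $-\epsilon^2\, \langle VF,\, \tilde H_0^{-1}(I-\Pi_0)\,VF\rangle$, where $V$ is the form operator associated to $q$ and $\Pi_0$ is the orthogonal projection onto the subspace $\cF_0$ of edgewise-constant functions. Because $\tilde H_0$ is strictly positive off its kernel, this form is manifestly non-positive, so each exotic eigenvalue satisfies $\tilde\mu(\epsilon) = O(\epsilon^2)$ with non-positive leading coefficient; equivalently $\lambda_n(\epsilon)$ converges to a finite, non-positive limit, which is Case (2b). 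The principal technical obstacle is making this formal second-order reduction rigorous for what is intrinsically a boundary form-perturbation (rather than a bounded additive one): the cleanest route is to construct explicit trial functions $\tilde f = F_0 + \epsilon f_1$ with $F_0 \in \cF_0$ and $f_1$ solving an inhomogeneous edgewise problem that absorbs the first-order matching defect, and then bracket the $n_0$ exotic eigenvalues from both sides via min-max. The same bracketing simultaneously certifies that no additional slow eigenvalues exist beyond the $m_0$ predicted.
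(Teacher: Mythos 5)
Your proposal is correct and follows essentially the same route as the paper: rescale to fixed lengths so that the matching conditions carry the $\epsilon$-dependence, identify the kernel of the unperturbed operator with $\cF_0$ via its quadratic form, read off the first-order slopes from the signature of $q$ restricted to that kernel (the weighted inner product being irrelevant by Sylvester, as you note), and settle the sign of the exotic eigenvalues at second order. The only difference is at that last step: the paper does not use the abstract reduced-resolvent/Feshbach formula but instead carries out the explicit eigenfunction expansion $f_0 + \epsilon f_1$ with $f_1\big|_e = b_e x + c_e$ and computes $\nu_n = -\sum_e \ell_e |b_e|^2 \le 0$ directly from $Q_\epsilon$ and the order-$\epsilon$ matching conditions --- which is precisely the ``explicit trial function absorbing the first-order matching defect'' route you identify as the rigorous fallback.
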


It is interesting to compare this result with the convergence results
established in \cite[Thm.~3.6]{BerLatSuk_am19}.  That theorem predicts
spectral convergence on any fixed interval, to the spectrum of the
Laplacian on a metric graph which, in the present context, is empty.
The sufficient condition for this convergence is the ``Non-Resonance
Condition'' \cite[Cond.~3.2]{BerLatSuk_am19}, which, adjusted to the
present context, reads
\begin{equation}
  \label{eq:NRC}
  \dim \Proj_{E}\big(\cL \cap (D_0 \oplus N_0)\big) = 0.
\end{equation}
Here $\cL$ is the subspace of $E \oplus E'$ consisting of $F$ and $F'$
satisfying matching conditions \eqref{eq:sa_cond_D}-\eqref{eq:sa_cond_R}
(assumed to be independent of $\epsilon$).  Comparing with
part~\eqref{item:slow}\eqref{item:slow_fin} of Theorem~\ref{thm:main},
we conclude that the Non-Resonance Condition~\eqref{eq:NRC} should be
the sufficient condition for the absense of exotic eigenvalues.  It
turns out to be a necessary condition as well, as shown by the
following.

\begin{proposition}
  \label{prop:NRCns}
  The space $\Proj_{E}\big(\cL \cap (D_0 \oplus N_0)\big)$ is the null
  space of the sesquilinear form
  $q[G,F] := \langle G, P_R \Lambda P_R F\rangle$ restricted to
  $\cF_0$.  In particular, a uniformly shrinking graph
  $H_\epsilon$ has no exotic eigenvalues if and only if the
  Non-Resonance Condition~\eqref{eq:NRC} holds.
\end{proposition}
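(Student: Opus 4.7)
The plan is to recast the projection on the left-hand side as an algebraic solvability condition, and then use annihilator duality between $E$ and $E'$ to recognize it as the null space of $q|_{\cF_0}$.

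The first ingredient is the observation that $D_0$ and $N_0$ are annihilators of one another under the canonical pairing between $E$ and $E'$: for any $G \in D_0$ and $F' \in N_0$, each edge contributes
\begin{equation*}
G_{e^-}\overline{F'_{e^-}} + G_{e^+}\overline{F'_{e^+}} = G_{e^-}\bigl(\overline{F'_{e^-}} + \overline{F'_{e^+}}\bigr) = 0,
\end{equation*}
using $G_{e^-}=G_{e^+}$ and $F'_{e^-} = -F'_{e^+}$; a dimension count $\dim D_0 + \dim N_0 = 2|\Edges|$ then upgrades this one-sided vanishing to mutual annihilation.

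Next I would unpack the condition $F \in \Proj_E(\cL \cap (D_0\oplus N_0))$: such an $F$ lies in $D_0 \cap \Ker P_D = \cF_0$, and there must exist $F' \in N_0$ satisfying $P_N F' = 0$ and $P_R F' = \Lambda P_R F$. These last two identities together say $F' - \Lambda P_R F \in \Ran P_D$, so the existence of $F'$ is equivalent to
\begin{equation*}
\Lambda P_R F \in N_0 + \Ran P_D.
\end{equation*}

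The main step is then a double-annihilator computation. Combining the duality from the first step with $(\Ran P_D)^{\perp} = \Ker P_D$, the annihilator in $E$ of $N_0 + \Ran P_D$ is $D_0 \cap \Ker P_D = \cF_0$; hence $N_0 + \Ran P_D$ is exactly the annihilator of $\cF_0$. Therefore $\Lambda P_R F$ lies in $N_0 + \Ran P_D$ if and only if $\langle G, \Lambda P_R F\rangle = 0$ for every $G\in\cF_0$. Since every $G \in \cF_0$ has $P_D G = 0$ and since $\Lambda P_R F \in \Ran P_R$ is orthogonal to $\Ran P_N$, that pairing collapses to $\langle G,\Lambda P_R F\rangle = \langle G, P_R\Lambda P_R F\rangle = q[G,F]$, identifying $\Proj_E(\cL\cap(D_0\oplus N_0))$ with the null space of $q|_{\cF_0}$. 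The final sentence of the proposition is then immediate from Theorem~\ref{thm:main}(\ref{item:slow})(\ref{item:slow_fin}): the exotic eigenvalues are counted by $n_0 = \dim\Ker(q|_{\cF_0}) = \dim\Proj_E(\cL\cap(D_0\oplus N_0))$, which vanishes precisely when~\eqref{eq:NRC} holds.

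The only real subtlety is keeping the bilinear pairing on $E \times E'$ distinct from the inner product on $E$ itself, so that the interchange ``annihilator of $\Ran P_D$'' $\leftrightarrow$ ``$\Ker P_D$'' is unambiguous; once this bookkeeping is fixed, each step above is a short linear-algebra calculation.
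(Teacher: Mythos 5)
Your proof is correct and follows essentially the same route as the paper's: both hinge on the mutual annihilation of $D_0$ and $N_0$ and the orthogonality of the projectors, and your double-annihilator identity $N_0 + \Ran P_D = \cF_0^\perp$ is exactly the solvability statement the paper uses when it constructs $F' = P_D G + \Lambda P_R F$ in the converse direction. The only cosmetic difference is that you package the two implications into a single chain of equivalences rather than proving them separately.
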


%%%%%%%%%%%%%%%%%%%%%%%%%%%%%%%%%%%%%%%%%%%%%%%%%%%
\section{Counting eigenvalues}

Rescaling the edges to constant lengths we obtain an operator $R_\epsilon$
on a fixed graph $\Gamma_1$ with lengths
$\big(\ell_e\big)_{e\in\Edges}$.  It acts as $-d^2/dx^2$ but with
matching conditions that now depend on $\epsilon$, 
\begin{align}
  \label{eq:sa_cond_D_rescaled}
  &P_D F = 0,\\
  \label{eq:sa_cond_N_rescaled}
  &P_N F' = 0,\\
  \label{eq:sa_cond_R_rescaled}
  &P_R F' = \epsilon \Lambda P_R F.
\end{align}
The eigenvalues of $H_\epsilon$ and $R_\epsilon$ are related by
\begin{equation}
  \label{eq:eig_rescale}
  \lambda\big(H_\epsilon\big) = \frac{\lambda(R_\epsilon)}{\epsilon^2}.
\end{equation}

We can now view the rescaled operator $R_\epsilon$ as a regular
perturbation problem for the operator $R_0$, obtained by simply
setting $\epsilon=0$ in
\eqref{eq:sa_cond_D_rescaled}-\eqref{eq:sa_cond_R_rescaled}.  The
fast eigenvalues of Theorem~\ref{thm:main} part (\ref{item:fast})
correspond to non-zero eigenvalues of $R_0$.  The various slow
eigenvalues branch out of the zero eigenspace of $R_0$.

We will show that $m_0 = \dim\Ker R_0$ and use Rellich--Kato Selection
Theorem \cite[Thms.\ II.5.4 and VII.3.9]{Kato_perturbation} (known as
Hellmann-Feynman formula for degenerate eigenvalues in the physics
literature) to classify how the eigenvalues branch out from zero.
This information is contained in the spectrum of the restriction of
$q$ to $\Ker R_0$.  More precisely, negative and positive eigenvalues
of the restricted $q$ correspond to ``slow negative'' and ``slow
positive'' eigenvalues of Theorem~\ref{thm:main} part
(\ref{item:fast}).  Zero eigenvalues of the restricted $q$ are
responsible for the ``exotic'' eigenvalues in Theorem~\ref{thm:main}
part (\ref{item:slow})

%%%%%%%%%
\subsection{Spectrum of the rescaled unperturbed problem.}

We first aim to understand the unperturbed problem
$R_0$.  Setting $\epsilon=0$ in the matching conditions
\eqref{eq:sa_cond_D_rescaled}-\eqref{eq:sa_cond_R_rescaled}, we
obtain the matching conditions for the unperturbed problem:
\begin{align}
  \label{eq:sa_cond_D_rescaled_unperturbed}
  &P_D F = 0,\\
  \label{eq:sa_cond_N_rescaled_unperturbed}
  &(I-P_D) F' = 0,
\end{align}
where we used that $P_D + P_N + P_R = I$.

\begin{lemma}
  \label{lem:unperturbed_kernel}
  The rescaled unperturbed operator $R_0$ is a non-negative
  self-adjoint operator with discrete spectrum.  The kernel of $R_0$
  is the span of all edgewise constant functions satisfying
  $P_D F = 0$.  Its dimension is equal to
  $m_0 = \dim\left(D_0 \cap \Ker P_D\right)$.
\end{lemma}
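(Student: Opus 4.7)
The plan is to realize $R_0$ through its natural quadratic form and then identify the kernel by direct computation. I would define
\[
q_0(f) := \sum_{e\in\Edges} \int_0^{\ell_e} |f'(x)|^2\, dx,
\qquad
\dom(q_0) := \Bigl\{f \in \bigoplus_{e\in \Edges} H^1(0,\ell_e) : P_D \trD f = 0\Bigr\}.
\]
Since $\trD$ extends boundedly from $\bigoplus_e H^1$ to $E$ and $P_D$ is a continuous orthogonal projector, $\dom(q_0)$ is closed in the $H^1$-topology; hence $q_0$ is a closed, non-negative form and defines a unique non-negative self-adjoint operator. The next step is to identify this operator with $R_0$ via integration by parts: for $f \in \dom(q_0) \cap \bigoplus_e H^2$ and $g \in \dom(q_0)$, Green's identity produces a boundary contribution $\langle F', G\rangle_E$ which, upon testing against arbitrary admissible $g$, forces the natural boundary condition $(I-P_D)F' = 0$ to complement the essential condition $P_D F = 0$. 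These are precisely \eqref{eq:sa_cond_D_rescaled_unperturbed}-\eqref{eq:sa_cond_N_rescaled_unperturbed}. Discreteness of the spectrum then follows from the compact embedding $\bigoplus_e H^1(0,\ell_e) \hookrightarrow \bigoplus_e L^2(0,\ell_e)$ on the finite graph $\Gamma_1$, which yields a compact resolvent.

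Next I would compute the kernel. Since $R_0 \ge 0$, the equation $R_0 f = 0$ is equivalent to $q_0(f) = 0$, forcing $f' \equiv 0$ on every edge. Thus $f$ is edgewise constant, say $f \equiv c_e$ on $e$; then $F_{e^-} = F_{e^+} = c_e$, so $F \in D_0$, while $F' = 0$ trivially satisfies $(I-P_D) F' = 0$. The only remaining domain constraint is $P_D F = 0$, i.e., $F \in \cF_0 = D_0 \cap \Ker P_D$. Conversely, any $F \in \cF_0$ extends by edgewise constancy to an element of $\Ker R_0$. Since the map $f \mapsto F$ restricted to edgewise-constant functions is a linear bijection onto $D_0$, I obtain $\dim \Ker R_0 = \dim \cF_0 = m_0$.

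The bulk of the argument is routine; the one step requiring some care is the identification of the form operator with the matching conditions \eqref{eq:sa_cond_D_rescaled_unperturbed}-\eqref{eq:sa_cond_N_rescaled_unperturbed}. This uses that $\Ran P_D$ and $\Ker P_D$ are orthogonal complements in $E$, so the vanishing of $\langle F', G\rangle_E$ for every $G \in \Ker P_D$ is equivalent to $F' \in \Ran P_D$, that is, $(I-P_D) F' = 0$. As an alternative, one may bypass the form entirely and verify self-adjointness directly from the Lagrangian subspace criterion recalled after \eqref{eq:trace_vectors}: the pair \eqref{eq:sa_cond_D_rescaled_unperturbed}-\eqref{eq:sa_cond_N_rescaled_unperturbed} cuts out the manifestly Lagrangian subspace $\Ker P_D \oplus \Ran P_D$ of $E \oplus E'$, with non-negativity then following from Green's identity.
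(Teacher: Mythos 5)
Your proposal is correct and follows essentially the same route as the paper: realize $R_0$ via the non-negative closed form $\int_{\Gamma_1}|f'|^2$ on $\{f : P_D F=0\}$, get discreteness from compactness of $\Gamma_1$, and identify the kernel with edgewise constants satisfying $P_DF=0$, hence with $D_0\cap\Ker P_D$. The only difference is that you spell out the steps (closedness of the form, recovery of the natural condition $(I-P_D)F'=0$ by integration by parts) for which the paper simply cites \cite{BerKuc_graphs}.
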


\begin{proof}
  For the purpose of establishing Lemma~\ref{lem:unperturbed_kernel},
  it is much simpler to work with the quadratic form
  corresponding to the operator $R_0$.  The form $Q_0$ is
  \begin{equation}
    \label{eq:Q0}
    Q_0[f] = \int_{\Gamma_1} |f(x)'|^2 dx,
    \qquad
    \dom[Q_0] = \left\{f \in \bigoplus_{e\in \Edges} H^1(0,\ell_e)
    \colon P_D F = 0 \right\}.
  \end{equation}
  It is immediate that $Q_0 \geq 0$.  That this form corresponds to a
  self-adjoint operator on the graph $\Gamma_1$ is a well-known fact
  (see, e.g., \cite[Sec.~1.4.3]{BerKuc_graphs}).  The spectrum is
  discrete due to compactness of $\Gamma_1$
  \cite[Sec.~3.1.1]{BerKuc_graphs}.

  To be in the kernel of $Q_0$, the function $f$ must have
  $f'\equiv0$, as can be seen directly from \eqref{eq:Q0}.  We
  conclude that $f$ is edge-wise constant, which immediately gives
  $F \in D_0$; $f$ also must belong to the domain of $Q_0$, i.e.\
  satisfy the condition $P_DF=0$.

  Conversely, any $G \in  D_0 \cap \Ker P_D$ corresponds to an
  edge-wise constant function $g$ which belongs to the domain of $R_0$
  and obviously satisfies $R_0g = 0$.
\end{proof}

%%%%%%%%%%%%
\subsection{Proof of Theorem~\ref{thm:main}}
\label{sec:main_proof}

From \cite{BerKuc_incol12} (see also \cite[Sec.~3.1.2]{BerKuc_graphs})
we know that the eigenvalues of $R_\epsilon$ depend analytically on
the parameter $\epsilon$.  In particular, for all $n > m_0$, the
$n$-th eigenvalue can be represented as
\begin{equation}
  \label{eq:Re_eig_expansion0}
  \lambda_n\big(R_\epsilon\big) = \lambda^0_n + O(\epsilon),
  \qquad
  \lambda^0_n := \lambda_n\big(R_0\big) > 0.
\end{equation}
Note that here we have perturbation with one parameter only.  In case
of eigenvalue multiplicity of $R_0$ we are effectively using the
Rellich--Kato Selection Theorem, but only from one
side, $\epsilon>0$.  Part~(\ref{item:fast}) of
Theorem~\ref{thm:main} now follows from \eqref{eq:eig_rescale}.

The lowest $m_0$ eigenvalues bifurcate from the eigenvalue 0 of $R_0$.
Here we apply the Rellich--Kato Selection Theorem in earnest, to
determine the slopes of eigenvalue bifurcation.  The appropriate version
of the theorem, for perturbations in the domain of the self-adjoint
operator, was established only recently
\cite[Thm.~3.23]{LatSuk_prep20} (see also
\cite[Thm.~2.10]{LatSuk_blms226}).  In our case, it boils down to
finding the eigenvalues of the derivative of the
quadratic form
\begin{equation}
  \label{eq:Qepsilon}
  Q_\epsilon[f] = \int_{\Gamma_1} |f(x)'|^2 dx
  + \epsilon \big\langle P_R (\trD f), \Lambda P_R (\trD f)\big\rangle 
\end{equation}
of the operator $R_\epsilon$ on the kernel of the operator $R_0$.  By
Lemma~\ref{lem:unperturbed_kernel}, the
eigenvalues of the quadratic form
$\big\langle P_R (\trD f), \Lambda P_R (\trD f)\big\rangle$ on
$\Ker R_0$ are the same as the eigenvalues of
$\big\langle P_R F, \Lambda P_R F\big\rangle$ on $D_0 \cap \Ker P_D$.
These eigenvalues, which we denote by $\{\mu_n\}_{n=1}^{m_0}$, yield
the next term in the expansion of $\lambda_n(R_\epsilon)$ for
$n \leq m_0$,
\begin{equation}
  \label{eq:Re_eig_expansion1}
  \lambda_n\big(R_\epsilon\big)
  = 0 + \mu_n\epsilon + O(\epsilon^2),
  \qquad
  n \leq m_0.
\end{equation}
Combined with \eqref{eq:eig_rescale}, we get part~(\ref{item:slow}) of
Theorem~\ref{thm:main}.

Finally, for $n$ such that $\mu_n=0$, we want to understand the sign
of the $\epsilon^2$ term in the expansion
\eqref{eq:Re_eig_expansion1}.  We know a priori
\cite{BerKuc_incol12,KucZha_jmp19} that the eigenvalues and
eigenfunctions are analytic in $\epsilon$ (or can be chosen to be so
in the cases of multiplicity).  Let $n \in \{n_-+1, \ldots, n_-+n_0\}$
and consider an eigenvalue with small $\epsilon$ expansion
\begin{equation}
  \label{eq:Re_eig_expansion2}
  \lambda_n\big(R_\epsilon\big)
  = 0 + \nu_n\epsilon^2 + O(\epsilon^3),
  %\qquad
  %n_- < n \leq n_-+n_0,
\end{equation}
and let
\begin{equation}
  \label{eq:efun_expansion}
  f(x;\epsilon) = f_0(x) + \epsilon f_1(x) + O(\epsilon^2)
\end{equation}
be the corresponding eigenfunction.  We will collect some information
about $f_0$ and $f_1$ and then obtain $\lambda_n\big(R_\epsilon\big)$
as $Q_\epsilon[f(x;\epsilon)]$.

Since $f_0 \in \Ker R_0$, we know it is edgewise constant.  Since
$-\frac{d^2}{dx^2} f(x; \epsilon) = \lambda_n f(x;\epsilon)$, we get
that $-\frac{d^2}{dx^2} f_1 = 0$ and therefore
\begin{equation}
  \label{eq:f_1_edges}
  f_1\big|_e = b_ex + c_e.
\end{equation}
Let $F_1' \in E'$, $F_1\in E$ and $F_0 \in E$ be the vectors of the
Neumann values of $f_1$ and the Dirichlet values of $f_1$ and $f_0$,
correspondingly (as remarked above, Neumann values of $f_0$ are zero).
Since the eigenfunction $f(x;\epsilon)$ satisfies the
$\epsilon$-dependent matching conditions \eqref{eq:sa_cond_D_rescaled}-\eqref{eq:sa_cond_R_rescaled},
we have
\begin{equation}
  \label{eq:PR10}
  P_D F_1 = 0, \quad
  P_N F_1' = 0, \quad
  P_R F_1' = \Lambda P_R F_0,
\end{equation}
which will become useful shortly.

Substituting expansions \eqref{eq:Re_eig_expansion2} and
\eqref{eq:efun_expansion} into $\lambda_n\big(R_\epsilon\big) =
Q_\epsilon[f(x;\epsilon)]$ and extracting the term of order
$\epsilon^2$, we get
\begin{align}
  \label{eq:e2terms}
  \nu_n
  &= \sum_{e\in\Edges} \int_{\ell_e} |b_e|^2 dx
    + \left<P_R F_0, \Lambda P_R F_1 \right>
    + \left<P_R F_1, \Lambda P_R F_0 \right>
  \\
  \label{eq:nu_expansion}
  &=
    \sum_{e\in\Edges} \ell_e |b_e|^2
    + 2 \Real \left<P_R F_1, P_R F_1' \right>,
\end{align}
where we used \eqref{eq:PR10} to get rid of $\Lambda P_R F_0$ (note that
$\Lambda$ is Hermitian).  We can simplify the inner product in
\eqref{eq:nu_expansion} further by noting that
\begin{align*}
  P_R F_1 &= (I - P_D - P_N) F_1 = (I-P_N) F_1,
  \\
  P_R F_1' &= (I-P_D - P_N) F_1' = (I-P_D) F_1',
\end{align*}
and therefore
\begin{equation}
  \label{eq:PR_removed2}
  \left<P_R F_1, P_R F_1' \right> = 
  \left< (I-P_N) F_1, (I-P_D) F_1' \right> =
  \left< F_1, F_1' \right>,
\end{equation}
both using \eqref{eq:PR10} as well as the mutual orthogonality of the
projectors $P_N$ and $P_D$.

We now compute $F_1$ and $F_1'$ more explicitly,
\begin{align}
  \label{eq:F1prime}
  F_1'\big|_{e^-}
  &=
    b_e,
  &
    F_1'\big|_{e^+}
  &=
    -b_e \\
  F_1\big|_{e^-}
  &=
    -\frac12 \ell_e b_e + c_e,
  &
  F_1\big|_{e^+}
  &=
    \frac12 \ell_e b_e + c_e,
\end{align}
where we assumed without loss of generality that the local coordinate
$x$ on the edge $e$ runs from $-\ell_e/2$ at the endpoint $e^{-}$ to
$\ell_e/2$ at the endpoint $e^{+}$.  In particular, we have $F_1' \in
N_0$ and
\begin{equation}
  \label{eq:F1_vector}
  F_1 = -\frac12 L F_1' + C,
\end{equation}
where $C\in D_0$ is a vector of $c_e$ and $L$ is the diagonal
$2|\Edges|\times2|\Edges|$ matrix with $\ell_e$ on the diagonal.  We
substitute \eqref{eq:F1_vector} into \eqref{eq:PR_removed2} to obtain
\begin{align}
  \label{eq:nu1}
  2 \Real \left<F_1, F_1' \right>
  &=
    2 \Real \left<-\frac12 L F_1' + C, F_1' \right>
    = - \Real \left<L F_1', F_1' \right>
  \\
  &=
    - \sum_{e\in\Edges} 2|b_e|^2 \ell_e, 
\end{align}
where we used that $C \perp F_1'$ (because $D_0 \perp N_0$) and the 2
appeared because each edge contributes two endpoints to the inner
product.

To summarize, with \eqref{eq:nu_expansion} we get
\begin{equation}
  \label{eq:nu_answer}
  \nu_n = - \sum_{e\in\Edges} 2|b_e|^2 \ell_e \geq 0,
\end{equation}
which is establishes the last part of Theorem~\ref{thm:main}, namely
that the exotic eigenvalues are non-positive.

%%%%%%%%%%%%
\subsection{Counting exotic eigenvalues: proof of
  Proposition~\ref{prop:NRCns}} 
\label{sec:compare_to_nonresonance}

Assuming
\begin{equation}
  \label{eq:asssumption_ker}
  (F,F') \in \cL \cap (D_0 \oplus N_0),  
\end{equation}
we want to show then $F$ belongs to the kernel of $q[G,F]$ restricted
to $\cF_0 = D_0 \cap \Ker P_D$.  Namely, we need to check that
$F \in \cF_0$ and that
\begin{equation}
  \label{eq:annihilate_form}
  \langle P_R \Phi, \Lambda P_R F\rangle = 0
  \qquad
  \text{for all }\Phi \in \cF_0.
\end{equation}
That $F\in\cF_0$ follows directly from \eqref{eq:asssumption_ker}.
Furthermore, since $F' \in N_0 = D_0^\perp$ (assuming the natural
identification of $E'$ and $E$), we get
\begin{equation*}
  0 = \langle \Phi, F' \rangle
  = \langle P_N \Phi + P_R \Phi, P_D F' + P_R F' \rangle
  = \langle P_R \Phi, P_R F' \rangle
  = \langle P_R \Phi, \Lambda P_R F \rangle.
\end{equation*}
Here, in the second equality we used that $P_D+P_N+P_R = I$,
$P_D\Phi = 0$ and $P_N F' = 0$; in the second we used mutual
orthogonality of $P_D$, $P_N$ and $P_R$; in the third we used equation
\eqref{eq:sa_cond_R}.  Note that this calculation is analogous with
the one used to derive \eqref{eq:PR_removed2}.

For the converse, we assume that an $F \in \cF_0$ satisfies and
\eqref{eq:annihilate_form}, and we seek an $F' \in N_0 = D_0^\perp$
such that \eqref{eq:asssumption_ker} holds.  We will look for $F'$ of
the form $F' = P_D G+\Lambda P_R F $, which immediately yields
$P_N F' = 0$ and $P_R F' = \Lambda P_R F$, fulfilling
\eqref{eq:asssumption_ker}.

The condition $P_D G+\Lambda P_R F \in D_0^\perp$ is equivalent to
finding $G\in E$ such that
\begin{equation}
  \label{eq:condG1}
  \left<P_D \Psi, G \right> = -\left<\Psi, \Lambda P_R F \right>
\end{equation}
for all $\Psi \in D_0$.  But equation \eqref{eq:condG1} is solvable for
$G$ since the right hand-side is a well-defined functional of
$P_D \Psi$.  Indeed, let $\Psi_1, \Psi_2 \in D_0$ be such that $\Psi_1-\Psi_2
\in \Ker P_D$.  Then $\Psi_1-\Psi_2 \in \cF_0$ and
\eqref{eq:annihilate_form} gives
\begin{equation}
  \label{eq:condG2}
  -\left<\Psi_1-\Psi_2, \Lambda P_R F \right> = 0,
\end{equation}
finishing the proof.

\appendix
%%%%%%%%%%%%%%%%%%%%%%%%%%%%%%%%%%%%%%%%%%%%%%%%%%%%%%
\section{Some basic examples}
\label{sec:examples}

In this section we list some basic examples illustrating different
asymptotic behaviors of the eigenvalues on a shrinking graph.  In
fact, in all examples the graph is just an interval.

\begin{example}
  \label{ex:Dirichlet}
  Consider the interval $(0,\epsilon)$ with Dirichlet matching
  conditions,
  \begin{equation}
    \label{eq:VC_Dirichlet}
    f(0) = 0, \qquad f(\epsilon) = 0.
  \end{equation}
  The eigenvalues are $\lambda_n = (\pi n)^2 \epsilon^{-2}$,
  $n\in\mathbb{N}$, i.e.\ all
  eigenvalues are of ``fast'' type,
  Theorem~\ref{thm:main}~\eqref{item:fast}.  To put conditions
  \eqref{eq:VC_Dirichlet} in the form
  \eqref{eq:sa_cond_D}-\eqref{eq:sa_cond_R}, we would set $P_D = I_2$
  (the $2\times2$ identity matrix), $P_N=P_R=0$.
\end{example}

\begin{example}
  \label{ex:Neumann}
  The eigenvalues the interval $(0,\epsilon)$ with Neumann matching
  conditions,
  \begin{equation}
    \label{eq:VC_Neumann}
    f'(0) = 0, \qquad -f'(\epsilon) = 0,
  \end{equation}
  are $\lambda_1 = 0$ and $\lambda_{n+1} = (\pi n)^2 \epsilon^{-2}$,
  $n \in \mathbb{N}$, i.e. all but one are ``fast'' and one is
  ``exotic''.  Conditions~\eqref{eq:VC_Neumann} are equivalent to $P_N
  = I_2$, $P_D=P_R=0$.
\end{example}

\begin{example}
  \label{ex:VC_Robin}
  If we change one of the vertices to Robin condition, namely
  \begin{equation}
    \label{eq:VC_RobinN}
    f'(0) = \gamma f(0), \qquad -f'(\epsilon) = 0,
  \end{equation}
  where $\gamma$ is real, then all but one eigenvalues are ``fast''
  while $\lambda_1 \approx \gamma / \epsilon$, i.e.\ a ``slow''
  eigenvalue with a sign which depends on $\gamma$.

  Notably, when the other vertex is Dirichlet, namely
  \begin{equation}
    \label{eq:VC_RobinD}
    f'(0) = \gamma f(0), \qquad f(\epsilon) = 0,
  \end{equation}
  then for every fixed $\gamma$ all eigenvalues are eventually
  positive and ``fast''.  The influence of the condition at $\epsilon$
  comes through $\Ker P_D$ which, in the case of
  conditions~\eqref{eq:VC_RobinD} is disjoint from $D_0$ resulting in
  $m_0 = \dim\cF_0 = 0$.

  Conditions \eqref{eq:VC_RobinN} are equivalent to
  \begin{equation}
    \label{eq:RD_proj}
    P_D = 0, \quad
    P_N =
    \begin{pmatrix}
      0 & 0 \\ 0 & 1
    \end{pmatrix},
    \quad
    P_R =
    \begin{pmatrix}
      1 & 0 \\ 0 & 0
    \end{pmatrix},
    \quad
    \Lambda =
    \begin{pmatrix}
      \gamma & \cdot \\
      \cdot & \cdot
    \end{pmatrix},
  \end{equation}
  while conditions~\eqref{eq:VC_RobinD} have the above $P_D$ and $P_N$
  swapped.
\end{example}

\begin{example}
  \label{ex:RobinRobin}
  Finally, Robin conditions at both ends may produce more variety,
  including a non-zero exotic eigenvalue $-\gamma^2$ when
  \begin{equation}
    \label{eq:RobinRobin}
    f'(0) = \gamma f(0), \qquad -f'(\epsilon) = -\gamma f(\epsilon).
  \end{equation}
  The latter can be seen to correspond to the eigenfunction $f(x) =
  \exp(\gamma x)$.

  The projectors in this example are
  \begin{equation}
    \label{eq:RR_proj}
    P_D = 0, \quad
    P_N = 0, \quad
    P_R = I_2, \quad
    \Lambda =
    \begin{pmatrix}
      \gamma & 0 \\
      0 & -\gamma
    \end{pmatrix}.
  \end{equation}
\end{example}

\bibliographystyle{myalpha}
\bibliography{bk_bibl,additional}

\end{document}